\newtheorem{lemma}{Lemma}
\newtheorem{theorem}{Theorem}
\title{An Integer Programming Formulation for the Maximally Diverse Grouping Problem}
\author{ Kevin Fu Yuan Lam \orcidlink{0000-0002-1535-7059}\\
	Office of the President\\
	National University of Singapore\\
	\texttt{lamfy@nus.edu.sg} \\
	\And
	Jiang Qian \orcidlink{0009-0004-7029-6879}\\
	Office of the President\\
	National University of Singapore\\
	\texttt{j.qian@nus.edu.sg} \\
}
\date{}
\begin{document}
\maketitle

\begin{abstract}
The Maximally Diverse Grouping Problem (MDGP) is the problem of assigning a set of elements to mutually disjoint groups in order to maximise the overall diversity between the elements. Because the MDGP is NP-complete, most studies have focused on heuristic solution approaches, as compared to exact solution approaches, to the problem. On the one hand, heuristic solution approaches, although common in practice, do not guarantee a global optimal solution. On the other hand, studies that have reformulated the problem as an integer linear programme, which can be solved using exact solution approaches, are either restricted to groups of equal size or restricted to the use of the Manhattan distance. The present paper presents a new integer linear programming formulation that is not subjected to either of these restrictions, and can therefore be used to establish useful benchmarks for the performance of heuristics in a broader range of applications moving forward.
\end{abstract}

\keywords{Assignment \and Combinatorial Optimization \and Maximally Diverse Grouping Problem \and Mixed-Integer Programming}

\section{Introduction}

The Maximally Diverse Grouping Problem (MDGP) is the problem of assigning a set of $N$ elements to $G$ mutually disjoint groups in order to maximise the overall diversity between the elements \cite{ramos2020}. Since its initial application to assign students to groups \cite{weitz1992}, the MDGP has been argued to also have applications in organisations and scientific funding agencies \cite{fan2011}: in organisations, companies with more diverse management teams, with respect to ethnicity, race or gender, tend to report higher returns as compared to those with less diverse management teams \cite{rock2016}; and in scientific funding agencies, a diverse group of panellists, with respect to demographics and areas of specialisation, can ensure that multiple perspectives are represented during the review process \cite{hettich2006}.

The MDGP is often formulated as an Integer Quadratic Programme (IQP): Weitz and Lakshminarayanan \cite{weitz1998} formulated the MDGP as an IQP such that the group sizes must be the same for all pairs of groups; Palubeckis et al. \cite{palubeckis2011} formulated the MDGP as an IQP such that the group sizes can be different for some pairs of groups, but the minimum group sizes and the maximum group sizes must be the same for all pairs of groups; and Fan et al. \cite{fan2011} formulated the MDGP as an IQP such that the group sizes can be different for some pairs of groups, and the minimum group sizes and the maximum group sizes can also be different for some pairs of groups.

Because the MDGP is NP-complete \cite{feo1990}, most studies have focused on heuristic solution approaches, as compared to exact solution approaches \cite{schulz2022}, including but not limited to neighbourhood search algorithms, evolutionary algorithms and swarm intelligence algorithms \cite{ramos2020}. For example, Brimberg et al. \cite{brimberg2017} solved it using a skewed general variable neighbourhood search algorithm, Fan et al. \cite{fan2011} solved it using a hybrid genetic algorithm and Rodriguez et al. \cite{rodriguez2013} solved it using an artificial bee colony algorithm.

Although heuristic solution approaches are often used in practice, these approaches do not guarantee a global optimal solution \cite{karp1986}. Conversely, although exact solution approaches are not often used in practice, these approaches guarantee a global optimal solution, and therefore serve as useful benchmarks for the performance of heuristics \cite{papenberg2021}. As such, a few studies have reformulated the MDGP as an Integer Linear Programme (ILP), which can then be solved using exact solution approaches such as the branch-and-price algorithm, the branch-and-cut algorithm and the branch-and-bound algorithm (e.g. \cite{junger2009}): Papenberg and Klau \cite{papenberg2021} reformulated the Weitz and Lakshminarayanan \cite{weitz1998} formulation as an ILP; and Schulz \cite{schulz2022} reformulated the Fan et al. \cite{fan2011} formulation as an ILP.

However, both of these reformulations remain limited in their application. First, the Papenberg and Klau \cite{papenberg2021} reformulation, like the Weitz and Lakshminarayanan \cite{weitz1998} formulation, is restricted to groups of equal size. Second, although the Schulz \cite{schulz2022} reformulation, like the Fan et al. \cite{fan2011} formulation, is not restricted to groups of equal size, it is restricted to the use of the Manhattan distance.

In order to address both of these limitations, the present study seeks to reformulate the Palubeckis et al. \cite{palubeckis2011} formulation as an ILP. Because the Palubeckis et al. \cite{palubeckis2011} formulation is neither restricted to groups of equal size nor to the use of the Manhattan distance, the proposed reformulation can serve as a useful benchmark for the performance of heuristics aimed at a wider range of MDGP that are commonly encountered in practice.

\section{Integer Quadratic Programme}
\label{iqp}

Palubeckis et al. \cite{palubeckis2011} formulated the MDGP as an IQP. Suppose that there are $N$ elements labelled $i=1, \dots,N$ and $G$ groups labelled $g=1,\dots,G$. Let $a$ be the lower bound for the group size, $b$ be the upper bound for the group size and $d_{ij}$ be the distance between elements $i$ and $j$. Let $x_{ig}$ be an indicator variable that is $1$ if element $i$ is in group $g$ and $0$ otherwise. Then, the problem is to choose values of $x_{ig}$ so as to

\begin{eqnarray}
    \max && \Sigma_{g=1}^{G} \Sigma_{i=1}^{N-1} \Sigma_{j=i+1}^{N} d_{ij} x_{ig} x_{jg} \\
    \text{s.t.} && \Sigma_{g=1}^{G} x_{ig} = 1, \quad i=1,..,N \\
    && \Sigma_{i=1}^{N} x_{ig} \geq a, \quad g=1,...,G \\
    && \Sigma_{i=1}^{N} x_{ig} \leq b, \quad g=1,...,G \\
    \text{and} && x_{ig} \in \{0,1\}, \quad i=1,...,N, \quad g=1,...,G
\end{eqnarray}

Objective Function (1) maximises the distances between pairs of elements in the same group. Constraint (2) ensures that each element is in one and only one group. Constraint (3) ensures that each group has at least $a$ elements. Constraint (4) ensures that each group has at most $b$ elements. Constraint (5) ensures that the decision variables are binary.

\section{Integer Linear Programme}
\label{ilp}

Papenberg and Klau \cite{papenberg2021} formulated the MDGP as an ILP subject to the constraint that all group sizes are equal. In this section, we describe their formulation and propose a formulation not subject to the constraint that all group sizes are equal.

\subsection{Equal Sized Groups}
\label{ilp_equal}

Suppose that there are $N$ elements labelled $i=1,\dots,N$ and $G$ groups labelled $g=1,\dots,G$. Let $a$ be the lower bound for the group size, $b$ be the upper bound for the group size and $d_{ij}$ be the distance between elements $i$ and $j$. Let $x_{ij}$ be an indicator variable that is $1$ if elements $i$ and $j$ are in the same group and $0$ otherwise. The problem is to choose values of $x_{ij}$ so as to

\begin{eqnarray}
    \max && \Sigma_{i=1}^{N-1} \Sigma_{j=i+1}^{N} d_{ij} x_{ij} \\
    \text{s.t.} && x_{ij} + x_{jk} - x_{ij} \leq 1, \quad \forall \text{ } 1 \leq i < j < k \leq N \\ 
    && x_{ij} + x_{ik} - x_{jk} \leq 1, \quad \forall \text{ } 1 \leq i < j < k \leq N \\
    && x_{ik} + x_{jk} - x_{ij} \leq 1, \quad \forall \text{ } 1 \leq i < j < k \leq N \\
    && \Sigma_{1 \leq i < j \leq N} x_{ij} + \Sigma_{1 \leq j < i \leq N} x_{ji} = \tfrac{N}{G} - 1, \quad \forall \text{ } i \in \{1, ..., N\} \\
    \text{and} && x_{ij} \in \{0,1\}, \quad i=1,...,N-1, \quad j=i+1,...,N
\end{eqnarray}

Objective Function (6) maximises the distances between pairs of elements in the same group. Constraints (7)–(9) ensures that any two elements that are in the same group as some third element must also be in the same group as each other. Therefore, it ensures that each element is in at most one group. Constraint (10) ensures that each group has $\tfrac{N}{G}$ elements. Therefore, it ensures that each element is in at least one group. Constraint (11) ensures that the decision variables are binary.

\subsection{Unequal Sized Groups}
\label{ilp_unequal}

The Papenberg and Klau \cite{papenberg2021} formulation is limited because it is restricted to groups of equal sizes \cite{schulz2022}. Schulz \cite{schulz2022} suggests that that the Papenberg and Klau \cite{papenberg2021} formulation could be made equivalent to the Palubeckis et al. \cite{palubeckis2011} formulation if Constraint (10) is bounded between $a-1$ and $b-1$ inclusive. However, doing so does not make the Papenberg and Klau \cite{papenberg2021} formulation equivalent to the Palubeckis et al. \cite{palubeckis2011} formulation because Constraints (3) and (4) in the Palubeckis et al. \cite{palubeckis2011} formulation state not just that all group sizes are between $a$ and $b$ inclusive for any number of groups, but all group sizes are between $a$ and $b$ inclusive for $G$ number of groups. 

For example, suppose that there are six elements with values $1$, $2$, $3$, $4$, $5$ and $6$. Further, suppose that each group must have either $2$ or $3$ elements (i.e., $a=2$ and $b=3$), and that there must be $3$ groups (i.e., $G=3$). The optimal value is $9$ with optimal solutions such as $\{1,5\}$, $\{2,4\}$ and $\{3,6\}$. However, the Papenberg and Klau \cite{papenberg2021} formulation with Constraint (10) bounded between $a-1$ and $b-1$ inclusive returns an optimal value of $16$ with optimal solutions such as $\{1,3,6\}$ and $\{2,4,5\}$ which are infeasible given that $a=2$ and $G=3$.

In order to address the limitation, we build on the Papenberg and Klau \cite{papenberg2021} formulation, and the Schulz \cite{schulz2022} suggestion to bound Constraint (10) between $a-1$ and $b-1$ inclusive, to reformulate the Palubeckis et al. \cite{palubeckis2011} formulation of the MDGP through the addition of a group count constraint.

Suppose that there are $N$ elements labelled $i=1,\dots,N$ and $G$ groups labelled $g=1,\dots,G$. Let $a$ be the lower bound for the group size, $b$ be the upper bound for the group size and $d_{ij}$ be the distance between elements $i$ and $j$. Let $x_{ij}$ be an indicator variable that is $1$ if elements $i$ and $j$ are in the same group and $0$ otherwise. Let $y_{i}$ be an indicator variable that is $1$ if element $i$ is the smallest index in its group and $0$ otherwise. The problem is to choose values of $x_{ij}$ and $y_{i}$  so as to

\begin{eqnarray}
    \max && \Sigma_{i=1}^{N-1} \Sigma_{j=i+1}^{N} d_{ij} x_{ij} \\
    \text{s.t.} && x_{ij} + x_{jk} - x_{ij} \leq 1, \quad \forall \text{ } 1 \leq i < j < k \leq N \\ 
    && x_{ij} + x_{ik} - x_{jk} \leq 1, \quad \forall \text{ } 1 \leq i < j < k \leq N \\
    && x_{ik} + x_{jk} - x_{ij} \leq 1, \quad \forall \text{ } 1 \leq i < j < k \leq N \\
    && \Sigma_{1 \leq i < j \leq N} x_{ij} + \Sigma_{1 \leq j < i \leq N} x_{ji} \geq a - 1, \forall \text{ } i \in \{1,...,N\} \\
    && \Sigma_{1 \leq i < j \leq N} x_{ij} + \Sigma_{1 \leq j < i \leq N} x_{ji} \leq b - 1, \forall \text{ } i \in \{1,...,N\} \\
    && x_{ij} + y_{i} \leq 1, \forall \text{ } i \in \{1,...,N-1\}, \quad \forall \text{ } j \in \{i+1,...,N\} \\
    && \Sigma_{i=1}^{j-1} x_{ij} + y_{j} \geq 1, \forall \text{ } j \in \{2,...,N\} \\
    && \Sigma_{j=2}^{N} y_{j} = G - 1 \\
    \text{and} && x_{ij} \in \{0,1\}, \quad i=1,...,N-1, \quad j=i+1,...,N \\
    && y_{i} \in \{0,1\}, \quad i=2,...,N
\end{eqnarray}

Objective Function (12) maximises the distances between pairs of elements in the same group.  Constraints (13) – (15) ensures that any two elements that are in the same group as some third element must also be in the same group as each other. Therefore, it ensures that each element is in at most one group. Constraint (16) ensures that each group has at least $a$ elements. Therefore, it ensures that each element is in at least one group. Constraint (17) ensures that each group has at most $b$ elements. Constraints (18) – (20) ensure that there are $G$ groups. In particular, Constraint (18) ensures that an element $i$ is not the smallest index in its group if it is grouped with a smaller index, Constraint (19) ensures that an element $i$ is the smallest index in its group if it is not grouped with a smaller index and Constraint (20) ensures that the number of elements with the smallest indices in their groups is equal to the number of groups. Constraints (21) and (22) ensure that all decision variables are binary.

The following results prove the equivalence between the present ILP formulation and the original Palubeckis et al. \cite{palubeckis2011} IQP formulation.

\begin{lemma} 
If $x_{ij}$ satisfy constraints (13) – (15) and (21), then the index set $I={1,\dots,N}$ can be partitioned into several disjoint subsets $I_1,\dots,I_G'$ such that
\begin{eqnarray}
    I &=& \{1,\dots,N\} = I_{1} \cup \dots \cup I_{G'}, \quad I_{s} \cap I_{t} \neq 0, \quad \forall \text{ } s \neq t \\
    x_{ij} &=& 
    \begin{cases}
        1, \quad i,j \in I_{s} \\
        0, \quad i\in I_{s}, j \in I_{t}, s \neq t 
    \end{cases}
\end{eqnarray}
\end{lemma}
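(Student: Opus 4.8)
The plan is to show that constraints (13)--(15), together with the binarity constraint (21), force the relation ``$x_{ij}=1$'' to be an equivalence relation on $I=\{1,\dots,N\}$; the desired partition $I_1,\dots,I_{G'}$ is then simply the collection of its equivalence classes. To make the relation reflexive and symmetric by convention, I would first extend the variables by setting $x_{ii}=1$ for all $i$ and $x_{ji}:=x_{ij}$ for $j>i$, and then define $i\sim j$ to mean $x_{ij}=1$.

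Reflexivity ($i\sim i$) and symmetry ($i\sim j\Rightarrow j\sim i$) then hold immediately, so the substance of the argument is transitivity. I would take three \emph{distinct} indices and relabel them in increasing order as $i<j<k$; the three pairwise values are $x_{ij}$, $x_{ik}$, $x_{jk}$, and constraints (13)--(15) assert precisely that whenever two of these three values equal $1$, the remaining one is $\geq 1$ and hence equals $1$ by (21). (Here I would flag that the printed constraint (13), $x_{ij}+x_{jk}-x_{ij}\leq 1$, is a typographical slip for $x_{ij}+x_{jk}-x_{ik}\leq 1$, the form that matches (14)--(15) and that the argument requires.) The three variants cover all three ways of choosing which pair of edges is ``given'', so transitivity follows regardless of where the shared index falls in the sorted order: if $p\sim q$ and $q\sim r$, then after writing $\{p,q,r\}$ as $i<j<k$ the two present edges are among $x_{ij},x_{ik},x_{jk}$ and the appropriate one of (13)--(15) forces the third, giving $p\sim r$.

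With $\sim$ established as an equivalence relation, I would let $I_1,\dots,I_{G'}$ be its equivalence classes, which are automatically nonempty, pairwise disjoint (so $I_s\cap I_t=\emptyset$ for $s\neq t$, correcting the ``$\neq 0$'' in (23)), and exhaust $I$; this is the partition (23). Property (24) is then immediate: if $i,j$ belong to the same class then $i\sim j$, i.e.\ $x_{ij}=1$; and if $i\in I_s$, $j\in I_t$ with $s\neq t$ then $i\not\sim j$, so $x_{ij}\neq 1$ and hence $x_{ij}=0$ by (21). I expect the only genuinely delicate point to be the bookkeeping in the transitivity step --- making explicit that the three constraints, each written only for the sorted order $i<j<k$, jointly handle all three positions the shared index can occupy --- with the remainder being routine.
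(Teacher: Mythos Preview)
Your proposal is correct and uses essentially the same idea as the paper: constraints (13)--(15) together with binarity encode transitivity of the relation ``$x_{ij}=1$'', so the indices split into equivalence classes with the stated property. The paper merely phrases this constructively---iteratively growing each $I_s$ by adjoining indices connected to some current member and invoking (13)--(15) to show the new index is connected to \emph{all} current members---rather than naming the equivalence-relation framework explicitly, but the underlying argument is identical.
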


\begin{proof}
    If all $x_{ij}=0$, then the lemma follows directly with $I_{i}=\{i\}(i=1,\dots,N)$. Now assume $x_{ij}=1$ for some $i,j$. Set $I_{1}=\{i,j\}$. If $x_{ik}=x_{jk}=0$ for all $k \in I \backslash I_{1}$, set $I_{1}$ fixed, and continue with $I \backslash I_{1}$. Otherwise, there exists some $k$ such that $x_{ik}=1$ or $x_{jk}=1$. Assume $x_{ik}=1$, then it follows from (14) that $x_{jk}=1$. Update $I_{1} = I_{1} \cup \{k\}$ and continue. If there exists an index $l \in I \backslash I_{1}$ such that $x_{ml}=1$ for some $m \in I_{1}$, then it follows from (13) – (15) that $x_{ml}=1$ for all $m \in I_{1}$. We could update $I_{1} = I_{1} \cup \{l\}$. Repeat the process until $x_{ij}=0$ for all $i \in I_{1}$ and $j \in I \backslash I_{1}$. Then set $I_{1}$ fixed and continue with $I \backslash I_{1}$. Repeat the above process, until all indexes are assigned. The final partition will satisfy the results as in the lemma.
\end{proof}

Constraints (16) and (17) ensure that the number of elements in each group must lie between $a$ and $b$. We only remain to show that there will be at least $G$ groups with constraints (18), (20) and (22).

\begin{theorem}
    With constraints (18), (20) and (22), $G' \geq G$.
\end{theorem}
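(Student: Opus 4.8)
The plan is to exhibit $G$ distinct \emph{block minima} among the indices and to observe that distinct block minima must come from distinct blocks of the partition supplied by the Lemma, which immediately forces $G' \ge G$.

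First I would fix, via the Lemma, a partition $I = I_1 \cup \dots \cup I_{G'}$ for which $x_{ij}=1$ precisely when $i$ and $j$ lie in a common block. For each block let $m_s = \min I_s$ be its smallest index. Because the blocks are disjoint and every block has a unique smallest element, the map $I_s \mapsto m_s$ is injective, so $G'$ equals exactly the number of indices that are minimal within their own block. The whole argument then reduces to producing $G$ distinct such minimal indices.

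Next I would translate the $y$ variables into statements about minimality. The key step is to use constraint (18) in the form $y_j = 1 \Rightarrow x_{ij} = 0$ for all $i < j$: by the partition description this says that no index below $j$ shares $j$'s block, i.e. $j$ is minimal in its block. Hence each $j \in \{2,\dots,N\}$ with $y_j=1$ is a block minimum, and constraint (20) together with the integrality (22) guarantees there are exactly $G-1$ of them. Adjoining the index $1$, which is minimal in whatever block contains it, produces a set $S = \{1\} \cup \{\, j \ge 2 : y_j = 1 \,\}$ of $G$ distinct block minima.

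Finally I would close the argument by injectivity: since a block cannot contain two of its own minima, the $G$ elements of $S$ sit in $G$ pairwise distinct blocks, whence $G' \ge |S| = G$. I expect the only real obstacle to be careful bookkeeping rather than any genuine difficulty: the index $1$ must be handled by hand because $y_1$ is not a declared variable (constraint (22) starts at $i=2$), and I must be sure to invoke constraint (18) in the direction ``flagged $\Rightarrow$ minimal'', which is all the lower bound needs — constraint (19), which supplies the reverse implication, is not required here.
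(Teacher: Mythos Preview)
Your argument is correct and is essentially the same pigeonhole idea as the paper's proof: constraint (18) forces the flagged indices to lie in pairwise distinct blocks, and then (20) together with (22) supplies enough flags to bound $G'$ from below. Your version is in fact a bit more careful than the paper's: the paper asserts that (20) yields ``exactly $G$ $y_j$'s equal to $1$'' and concludes directly, whereas (20) literally gives only $G-1$ ones among $y_2,\dots,y_N$; you close this gap by adjoining the index $1$ explicitly as an additional block minimum, which the paper leaves implicit.
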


\begin{proof}
    We can show that if $i,j \in I_{s}$ for some $s$, then $y_{i}$ and $y_{j}$ cannot both equal to $1$. Indeed, if $y_{i}=y_{j}=1$, noting $x_{ij}=1$ since $i,j \in I_{s}$, $x_{ij}+y_{j}=2$ contradicts with (18). This means that in each subset $I_{s}$, $y_{j}$ can take the value of $1$ for at most one $j \in I_{s}$. As (20) tells that there will be exactly $G$ $y_{j}$-s being $1$, so the number of disjoint subsets $G'$ cannot be less than $G$.
\end{proof}

\begin{theorem}
    Additionally with the constraint (19), $G'=G$.
\end{theorem}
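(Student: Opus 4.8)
The plan is to combine the lower bound $G' \geq G$ already furnished by the previous theorem with a matching upper bound $G' \leq G$ obtained from the newly added constraint (19); together these force $G' = G$. Since the $\geq$ direction is in hand, the whole task reduces to proving $G' \leq G$.

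First I would invoke the partition $I = I_1 \cup \dots \cup I_{G'}$ guaranteed by the Lemma, and for each block $I_s$ let $m_s = \min I_s$ denote its smallest index. The key claim is that every block leader $m_s$ is marked, i.e. $y_{m_s} = 1$, under the convention $y_1 = 1$ (element $1$ is the global minimum and hence the smallest index of whatever block contains it). For a block with $m_s \geq 2$ I would argue as follows: because $m_s$ is the minimum of $I_s$, no smaller index lies in $I_s$, so every $i < m_s$ belongs to a different block and the Lemma gives $x_{i,m_s} = 0$. Then constraint (19) with $j = m_s$ reads $\Sigma_{i=1}^{m_s - 1} x_{i,m_s} + y_{m_s} \geq 1$, which collapses to $y_{m_s} \geq 1$, forcing $y_{m_s} = 1$.

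Next I would count. The $G'$ block leaders $m_1, \dots, m_{G'}$ are pairwise distinct (the blocks are disjoint), and each is marked by the claim, so the number of marked indices is at least $G'$. On the other hand the total number of marked indices is exactly $G$: constraint (20) supplies $\Sigma_{j=2}^{N} y_j = G - 1$ marked indices among $\{2,\dots,N\}$, and the convention $y_1 = 1$ contributes element $1$, for a total of $G$. Hence $G' \leq G$, and combining with $G' \geq G$ from the previous theorem yields $G' = G$.

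The step I expect to require the most care is the bookkeeping around element $1$ and the off-by-one in constraint (20): the sum there runs only over $j \geq 2$ and equals $G - 1$, so the argument hinges on consistently treating element $1$ as a distinguished, always-marked leader so that the leader count matches $G$ rather than $G - 1$. I would also verify that the appeal to (19) is legitimate, namely that $m_s$ being the block minimum genuinely zeroes out $\Sigma_{i < m_s} x_{i,m_s}$, which is precisely the point at which the structure of the partition from the Lemma is used.
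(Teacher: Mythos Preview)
Your proposal is correct and follows essentially the same approach as the paper: both show that every block $I_s$ contributes at least one index with $y=1$ (you argue directly via the block leader $m_s$, the paper argues by contradiction from a hypothetical all-zero block), then count against constraint (20) to get $G' \leq G$ and combine with Theorem~1. Your explicit treatment of the convention $y_1 = 1$ to reconcile the $G-1$ in (20) with the $G$ blocks is in fact more careful than the paper's own proof, which leaves this edge case implicit.
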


\begin{proof}
    Assume that there exists some $I_{s}$ such that $y_{j}=0$ for all $j \in I_{s}$. Taking the smallest index $j_{1}$ in $I_{s}$, then $x_{ij}=0$ for all $i \leq j_{1}$, and hence
    \begin{eqnarray}
        \Sigma_{i < j_{1}} x_{ij} + y_{j_{1}} = 0
    \end{eqnarray}
    
    contradicting (19). This implies that for each subset $I_{s}$, $y_{j}$ will take the value of $1$ for at least one $j \in I_{s}$. Hence $G' \leq G$, which, together with Theorem 1 shows that $G'=G$.
\end{proof}

\section{Conclusion}
\label{conclusion}

In the present paper, we have reformulated the Palubeckis et al. \cite{palubeckis2011} formulation of the MDGP as an ILP. The reformulation builds on previous reformulations and suggestions provided by Papenberg and Klau \cite{papenberg2021} and Schulz \cite{schulz2022}, and is neither restricted to groups of equal size nor to the use of the Manhattan distance. The relaxation of the constraint of groups of equal size is important, especially since common applications of the MDGP, including but not limited to educational institutions, business organisations and scientific funding agencies, require the assignment of individuals to groups that are not necessarily of the same size. In addition, because there are no restrictions on the distance metric, the distance between two elements can be measured using various approaches, including the Gower metric which is able to quantify distances using both quantitative (e.g., students' test scores) and qualitative (e.g., students' major) information \cite{gower1971}.

Nonetheless, the present paper is not without limitations. For example, the current reformulation of the Palubeckis et al. \cite{palubeckis2011} formulation of the MDGP remains a special case of the Fan et al. \cite{fan2011} formulation of the MDGP. In other words, although it is not restricted to groups of equal size, it still requires the minimum group size, as well as the maximum group size, to be the same for all groups. As discussed above, the Schulz \cite{schulz2022} reformulation requires neither the minimum group size nor the maximum group size to be the same for all groups, but is restricted to the use of the Manhattan distance. Future research can relax the corresponding constraints in either the current reformulation or the Schulz \cite{schulz2022} reformulation for even wider applicability.

In conclusion, although numerous heuristic solution approaches have been proposed, and are used in practice, these approaches to not guarantee a global optimal solution. Given that practical applications of the MDGP often encounter groups of unequal sizes, as well as use metrics other than the Manhattan distance, the present paper, which proposes a reformulation that relaxes both of these constraints, and that can be solved using exact solution approaches which guarantee a global optimal solution, enables us to establish useful benchmarks for the performance of these heuristics in a broader range of applications moving forward.

\vspace{0.5cm}
\textbf{Acknowledgements}

We would like to acknowledge and thank Gan Tian, Gao Xuru, Song Kaiyue, Wang Guangyu and Zhang Jiayue for helpful discussions during the initial phases of this project. 

\bibliography{references}  

\begin{thebibliography}{10}

\bibitem{ramos2020}
O.~Ramos-Figueroa, M.~Quiroz-Castellanos, E.~Mezura-Montes, and O.~Sch{\"u}tze, ``Metaheuristics to solve grouping problems: A review and a case study,'' {\em Swarm and Evolutionary Computation}, vol.~53, p.~100643, 2020.

\bibitem{weitz1992}
R.~R. Weitz and M.~T. Jelassi, ``Assigning students to groups: a multi-criteria decision support system approach,'' {\em Decision Sciences}, vol.~23, no.~3, pp.~746--757, 1992.

\bibitem{fan2011}
Z.-P. Fan, Y.~Chen, J.~Ma, and S.~Zeng, ``Erratum: A hybrid genetic algorithmic approach to the maximally diverse grouping problem,'' {\em Journal of the Operational Research Society}, vol.~62, no.~7, pp.~1423--1430, 2011.

\bibitem{rock2016}
D.~Rock and H.~Grant, ``Why diverse teams are smarter,'' {\em Harvard Business Review}, vol.~4, no.~4, pp.~2--5, 2016.

\bibitem{hettich2006}
S.~Hettich and M.~J. Pazzani, ``Mining for proposal reviewers: lessons learned at the national science foundation,'' in {\em Proceedings of the 12th ACM SIGKDD international conference on Knowledge discovery and data mining}, pp.~862--871, 2006.

\bibitem{weitz1998}
R.~R. Weitz and S.~Lakshminarayanan, ``An empirical comparison of heuristic methods for creating maximally diverse groups,'' {\em Journal of the operational Research Society}, vol.~49, no.~6, pp.~635--646, 1998.

\bibitem{palubeckis2011}
G.~Palubeckis, E.~Kar{\v{c}}iauskas, and A.~Ri{\v{s}}kus, ``Comparative performance of three metaheuristic approaches for the maximally diverse grouping problem,'' {\em Information Technology and Control}, vol.~40, no.~4, pp.~277--285, 2011.

\bibitem{feo1990}
T.~A. Feo and M.~Khellaf, ``A class of bounded approximation algorithms for graph partitioning,'' {\em Networks}, vol.~20, no.~2, pp.~181--195, 1990.

\bibitem{schulz2022}
A.~Schulz, ``A new mixed-integer programming formulation for the maximally diverse grouping problem with attribute values,'' {\em Annals of Operations Research}, vol.~318, no.~1, pp.~501--530, 2022.

\bibitem{brimberg2017}
J.~Brimberg, S.~Jani{\'c}ijevi{\'c}, N.~Mladenovi{\'c}, and D.~Uro{\v{s}}evi{\'c}, ``Solving the clique partitioning problem as a maximally diverse grouping problem,'' {\em Optimization Letters}, vol.~11, pp.~1123--1135, 2017.

\bibitem{rodriguez2013}
F.~J. Rodriguez, M.~Lozano, C.~Garc{\'\i}a-Mart{\'\i}nez, and J.~D. Gonz{\'a}lez-Barrera, ``An artificial bee colony algorithm for the maximally diverse grouping problem,'' {\em Information Sciences}, vol.~230, pp.~183--196, 2013.

\bibitem{karp1986}
R.~M. Karp, ``Combinatorics, complexity, and randomness,'' {\em Communications of the ACM}, vol.~29, no.~2, pp.~98--109, 1986.

\bibitem{papenberg2021}
M.~Papenberg and G.~W. Klau, ``Using anticlustering to partition data sets into equivalent parts.,'' {\em Psychological Methods}, vol.~26, no.~2, p.~161, 2021.

\bibitem{junger2009}
M.~J{\"u}nger, T.~M. Liebling, D.~Naddef, G.~L. Nemhauser, W.~R. Pulleyblank, G.~Reinelt, G.~Rinaldi, and L.~A. Wolsey, {\em 50 Years of integer programming 1958-2008: From the early years to the state-of-the-art}.
\newblock Springer Science \& Business Media, 2009.

\bibitem{gower1971}
J.~C. Gower, ``A general coefficient of similarity and some of its properties,'' {\em Biometrics}, pp.~857--871, 1971.

\end{thebibliography}






\end{document}